\def\bZ{\textbf{Z}}
\def\bR{\textbf{R}}
\def\bC{\textbf{C}}
\def\bP{\textbf{P}}
\newtheorem{thm}{Theorem}[section]
\newtheorem{defn}{Definition}[section]
\newtheorem{nb}{Remark}[section]
\newtheorem{conj}{Conjecture}[section]
\numberwithin{equation}{section}
\begin{document}

\title[Matrix factorizations from SYZ]{Matrix factorizations from SYZ transformations}
\author[K.-W. Chan]{Kwokwai Chan}
\address{Department of Mathematics, Harvard University, Cambridge, MA 02138}
\email{kwchan@math.harvard.edu}
\author[N.-C. Leung]{Naichung Conan Leung}
\address{The Institute of Mathematical Sciences and Department of Mathematics, The Chinese University of Hong Kong, Shatin, Hong Kong}
\email{leung@math.cuhk.edu.hk}

\dedicatory{Dedicated to Prof. S.-T. Yau on the occasion of his 60th birthday.}

\begin{abstract}
It is known that Lagrangian torus fibers of the moment map of a toric Fano manifold $X$, equipped with flat $U(1)$-connections, are mirror to matrix factorizations of the mirror superpotential $W:\check{X}\rightarrow\bC$. Via SYZ mirror transformations, we describe how this correspondence, when $X$ is $\bP^1$ or $\bP^2$, can be explained in a geometric way.
\end{abstract}

\maketitle

\tableofcontents

\section{Introduction}\label{sec1}

In \cite{Chan-Leung08a}, \cite{Chan-Leung08b}, we study the mirror symmetry correspondence between the symplectic geometry (A-model) of a toric Fano manifold $X$ and the complex geometry (B-model) of its mirror Landau-Ginzburg model $(\check{X},W)$ using SYZ transformations. In this sequel, we will continue to investigate this correspondence, but the theme this time is Homological Mirror Symmetry. More specifically, we are going to study the correspondence between Lagrangian torus fibers of the moment map of $X$ (equipped with flat $U(1)$-connections) and \textit{matrix factorizations} of the mirror superpotential $W:\check{X}\rightarrow\bC$ (cf. \cite{KL02}) via SYZ transformations.

In the toric Fano case, Homological Mirror Symmetry (\`{a} la Kontsevich) asserts that the derived Fukaya category $DFuk(X)$ of a toric Fano manifold $X$ is equivalent, as a triangulated category, to the category of matrix factorizations $MF(\check{X},W)$ of the mirror Landau-Ginzburg model $(\check{X},W)$:\footnote{More precisely, this is just "one half" of the Homological Mirror Symmetry conjecture. See Cho-Oh \cite{CO03} and Cho \cite{Cho04} for related results.}
\begin{equation}\label{hms1}
DFuk(X)\cong MF(\check{X},W).
\end{equation}
In \cite{Orlov03}, Orlov showed that $MF(\check{X},W)$ is equivalent to \textit{the category of singularities} $D_{Sg}(\check{X},W)$ of the Landau-Ginzburg model $(\check{X},W)$. Hence, the above Homological Mirror Symmetry statement can be reformulated as the following equivalence:
\begin{equation}\label{hms2}
DFuk(X)\cong D_{Sg}(\check{X},W).
\end{equation}

From the point of view of the SYZ conjecture \cite{SYZ96}, the equivalence (\ref{hms2}) appears to be much more natural. This is because Lagrangian torus fibers of $X$ equipped with flat $U(1)$-connections can be transformed (via SYZ transformations) to structure sheaves of points in $\check{X}$. From this, one can proceed to define the functor we need in (\ref{hms2}).

On the other hand, matrix factorizations are geometrically endomorphisms of holomorphic vector bundles. The SYZ construction tells us that holomorphic vector bundles should be mirror to Lagrangian \textit{multi-sections}. However, the equivalence (\ref{hms1}) says that Lagrangian torus fibers of $X$ equipped with flat $U(1)$-connections are also corresponding to matrix factorizations of $(\check{X},W)$. Apparently, this seems to be in conflict with the SYZ picture.

The aim of this paper is to demonstrate that the equivalence (\ref{hms1}) is in fact also compatible with the SYZ picture. We will describe how the correspondence between Lagrangian torus fibers of $X$ equipped with flat $U(1)$-connections and matrix factorizations of $(\check{X},W)$ can be explained geometrically using SYZ transformations. We shall consider the case when $X$ is either $\bP^1$ or $\bP^2$, equipped with a toric K\"{a}hler structure $\omega$, i.e. the Fubini-Study K\"{a}hler form. Let $P$ be the moment polytope associated with the Hamiltonian torus-action on $(X,\omega)$. Let $L_0$ be the Lagrangian torus fiber over the center of mass $x_0$ of the polytope $P$ and equip $L_0$ with the trivial flat $U(1)$-connection $\nabla$. This gives an A-brane $(L_0,\nabla)$ on $X$. We want to cook up a matrix factorization $M_0$ of $W:\check{X}\to\bC$, which is mirror to this A-brane, via SYZ mirror transformations.\footnote{Though we restrict our attention to these so-called \textit{Clifford tori} in projective spaces, our methods can in fact deal with other Lagrangian torus fibers (and equipped with other flat $U(1)$-connections) as well.}

In general, a matrix factorization of a Landau-Ginzburg model $(\check{X},W)$ is a square matrix $M$ of even dimensions with entries in the coordinate ring $\bC[\check{X}]$ and of the form
$$M=\left( \begin{array}{cc}
0 & F \\
G & 0 \end{array} \right)$$
such that
\begin{equation}\label{MF}
M^2=(W-\lambda)\textrm{Id}
\end{equation}
for some $\lambda\in\bC$ (cf. \cite{KL02}, \cite{Orlov03}). Geometrically, $M$ should be viewed as an odd endomorphism of a (trivial) $\bZ/2\bZ$-graded holomorphic vector bundle over $\check{X}$.

For example, when $X=\bP^1$, the mirror is given by a bounded domain $\check{X}\subset\bC^*$ together with the superpotential $W=z+\frac{q}{z}$, where $z$ is a coordinate on $\bC^*$ and $q\in\bR_{<1}$; and $M_0$ is given by
$$M_0=\left( \begin{array}{cc}
0 & z-\sqrt{q} \\
1-\frac{\sqrt{q}}{z} & 0 \end{array} \right).$$
When $X=\bP^2$, the mirror is given by a bounded domain $\check{X}\subset(\bC^*)^2$ together with the superpotential $W=z_1+z_2+\frac{q}{z_1z_2}$, where $z_1,z_2$ are coordinates on $(\bC^*)^2$ and $q\in\bR_{<1}$; and $M_0$ is given by
$$M_0=\left(\begin{array}{cccc}
0 & 0 & z_1-q^{1/3} & z_2-\frac{q^{2/3}}{z_1} \\
0 & 0 & -(1-\frac{q^{1/3}}{z_2}) & 1-\frac{q^{1/3}}{z_1}\\
1-\frac{q^{1/3}}{z_1} & -(z_2-\frac{q^{2/3}}{z_1}) & 0 & 0\\
1-\frac{q^{1/3}}{z_2} & z_1-q^{1/3} & 0 & 0 \end{array}\right).$$

To produce $M_0$ from $(L_0,\nabla)$, our strategy is to first try to deform $L_0$ by Hamiltonian symplectomorphisms to another Lagrangian subspace $L\subset X$ such that $L$ is a multi-section over a certain open set $U$ contained in the interior $\textrm{Int}(P)$ of $P$. For each point $x\in U$, let $L_x\subset X$ be the Lagrangian torus fiber over $x$. We then count the number of holomorphic disks $\varphi:(D^2,\partial D^2)\rightarrow(X,L\cup L_x)$,\footnote{More precisely, the upper (resp. lower) half of $\partial D^2$ is mapped to $L$ (resp. $L_x$). See Section \ref{sec2} for precise definitions.} and use these enumerative data to define a $2r\times2r$ matrix $\Psi_{L,\nabla}(x)$, where $2r=|L\cap L_x|$ is the number of intersection points. Letting $x$ vary in $U$ defines a matrix-valued function $\Psi_{L,\nabla}$ on $U$. The SYZ transformation (or fiberwise Fourier transform) of $\Psi_{L,\nabla}$ would then give a matrix factorization $M_0$ of $(\check{X},W)$, which is mirror to $(L_0,\nabla)$.

The geometry behind this procedure can be described as follows: Each term in the mirror superpotential $W$ (which is a Laurent polynomial) corresponds to a holomorphic disk in $X$ with boundary in a Lagrangian torus fiber $L_x$. Intuitively, \textit{a matrix factorization of $W$ is corresponding to cutting each of these disks into two halves by the deformed Lagrangian $L$}. From this perspective, each term of each entry in $M_0$ is corresponding to a disk $\varphi:D^2\to X$ with boundary in $L\cup L_x$. This explains why we need to count these holomorphic disks. In fact, according to the Floer theory developed by Fukaya-Oh-Ohta-Ono \cite{FOOO06}, the matrix factorization $M_0$ is expected to be mirror to the Floer differential $\mathfrak{m}_1:=\mathfrak{m}_1((L_x,\nabla_y),(L,\nabla))$ for the A-branes $(L_x,\nabla_y)$ and $(L,\nabla)$, where $\nabla_y$ is a flat $U(1)$-connection on $L_x$, for which we have the following formula analogous to (\ref{MF}):
$$\mathfrak{m}_1^2=W-W(z_0),$$
where $z_0\in\check{X}$ is the point mirror to $(L_0,\nabla)$.

The construction of the deformed Lagrangian $L$ and the classification of disks can be carried out easily when $X=\bP^1$ (see Section \ref{sec2} for details). In the $\bP^2$ case, however, we encounter serious difficulty in implementing all the steps. In particular, the deformed Lagrangian subspace $L$, constructed as a union of Lagrangian strata, is highly singular, and hence it is very hard to classify the holomorphic disks $\varphi:(D^2,\partial D^2)\rightarrow(X,L\cup L_x)$. Our way out is to look for pairs of paths in $L$ and $L_x$ which can possibly form the boundaries of holomorphic disks. This provides a heuristic way to count holomorphic disks, and we shall use this counting to define the matrix-valued function $\Psi_{L,\nabla}$ in the $\bP^2$ case. Now, our main result can be stated as follows.
\begin{thm}\label{main}
The SYZ transformation of $\Psi_{L,\nabla}$ gives a matrix factorization $M_0$ of $(\check{X},W)$, which is mirror to the A-brane $(L_0,\nabla)$ on $X$.
\end{thm}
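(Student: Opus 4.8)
The plan is to check directly that $M_0$, the fiberwise Fourier transform (i.e. SYZ transformation) of the matrix-valued function $\Psi_{L,\nabla}$, is a matrix factorization of $(\check X,W)$, that is, $M_0^2=(W-\lambda)\,\textrm{Id}$ for some $\lambda\in\bC$, and then to identify $\lambda$ with $W(z_0)$, where $z_0\in\check X$ is the critical point of $W$ that is SYZ-dual to $(L_0,\nabla)$---namely $z_0=\sqrt q$ when $X=\bP^1$ and $z_0=(q^{1/3},q^{1/3})$ when $X=\bP^2$. The \emph{mirror} half of the statement is essentially built into the construction: $\Psi_{L,\nabla}$ is assembled purely from the A-brane data of $(L,\nabla)$, which is Hamiltonian-isotopic to $(L_0,\nabla)$ by construction, and $M_0$ is literally its SYZ transform; so the real content is the factorization identity together with $\lambda=W(z_0)$, which matches the expected Floer-theoretic formula $\mathfrak{m}_1^2=W-W(z_0)$ recalled above. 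Concretely, in each of the two cases I would (1) write down the Hamiltonian deformation $L$ of $L_0$ and the open set $U\subset\textrm{Int}(P)$ over which $L$ is a multi-section; (2) classify, or heuristically count, the disks $\varphi:(D^2,\partial D^2)\to(X,L\cup L_x)$ for $x\in U$; (3) weight each disk by $\exp$ of minus its symplectic area and by the holonomy of the connections around its boundary, and collect these data into $\Psi_{L,\nabla}$; and (4) apply the fiberwise Fourier transform and square the result.

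For $X=\bP^1$ the Lagrangian $L$ is a circle double-covering an interval $U$, so $L\cap L_x$ consists of two points and $\Psi_{L,\nabla}(x)$ is $2\times 2$. I would carry out step (2) by cutting, with $L$, the two Maslov-index-$2$ disks bounded by $L_x$ computed by Cho--Oh \cite{CO03}: one corresponds to the term $z$ of $W$ and the other to $q/z$, and each is split by $L$ into an upper half ending on $L$ and a lower half ending on $L_x$; these halves are exactly the configurations feeding the two off-diagonal entries of $\Psi_{L,\nabla}$. Performing the fiberwise Fourier transform then turns these entries into $z-\sqrt q$ and $1-\sqrt q/z$, giving the stated $M_0$, whose square is diagonal with both entries equal to $(z-\sqrt q)(1-\sqrt q/z)=z+q/z-2\sqrt q=W-2\sqrt q$; since $2\sqrt q=W(\sqrt q)$, we get $\lambda=W(z_0)$.

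For $X=\bP^2$ the deformed Lagrangian $L$, built as a union of Lagrangian strata, is highly singular, so there is no honest moduli space of disks, and step (2) is replaced by the path-pairing heuristic indicated in the introduction: one looks for pairs $(\gamma,\gamma')$ with $\gamma\subset L$, $\gamma'\subset L_x$, $\partial\gamma=\partial\gamma'$ that can bound a holomorphic disk, and weights each admissible pair as in step (3). The three terms $z_1$, $z_2$, $\frac{q}{z_1z_2}$ of $W$, cut in half by the strata of $L$, organize these contributions into a $4\times 4$ matrix $\Psi_{L,\nabla}$ (here $|L\cap L_x|=4$), and the fiberwise Fourier transform produces the displayed $M_0$. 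Writing $M_0$ in block form with off-diagonal blocks $F$ and $G$---built from the factors $z_1-q^{1/3}$, $z_2-\frac{q^{2/3}}{z_1}$, $1-\frac{q^{1/3}}{z_1}$, $1-\frac{q^{1/3}}{z_2}$ displayed above---a block computation gives $FG=GF=(W-3q^{1/3})\,\textrm{Id}$, and $3q^{1/3}=W(q^{1/3},q^{1/3})=W(z_0)$, which completes the verification.

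The main obstacle is the $\bP^2$ case. Since $L$ is a singular union of strata, the disk count underlying $\Psi_{L,\nabla}$ cannot be defined by standard Floer theory, so the path-pairing heuristic must be arranged carefully enough that (i) the resulting $\Psi_{L,\nabla}$ is independent of the auxiliary choices made in constructing $L$, and (ii) after the Fourier transform the off-diagonal contributions to $M_0^2$ cancel while every diagonal entry reproduces $W-\lambda$. Making the bookkeeping of symplectic areas, Maslov indices, orientation signs and holonomy weights mutually consistent across all the strata, so that $M_0^2$ collapses exactly to $(W-\lambda)\,\textrm{Id}$, is the technical heart of the argument.
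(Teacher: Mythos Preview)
Your proposal is correct and follows essentially the same route as the paper: construct the deformed Lagrangian $L$, classify the relevant disks (honestly for $\bP^1$, via a path-pairing heuristic for $\bP^2$), assemble $\Psi_{L,\nabla}$ from their areas and holonomies, apply the fiberwise Fourier transform, and verify the factorization identity by direct computation. The paper's additional content beyond your outline is precisely what you flag as the technical heart: it makes the $\bP^2$ heuristic explicit through a formal notion of \emph{permissible pairs of paths} (Definition~\ref{defn3.1}), classifies them completely (Theorem~\ref{thm3.1}), and reads off the entries of $\Psi_{L,\nabla}$ from that list---with the caveat that the signs must be assigned by hand rather than derived from orientation data.
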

We conjecture that when $L$ is smoothed out, our heuristic counting will give the genuine counting of holomorphic disks. Furthermore, the matrix factorization $M_0$ should coincide with the Floer differential $\mathfrak{m}_1=\mathfrak{m}_1((L_x,\nabla_y),(L,\nabla))$ for the A-branes $(L,\nabla)$ and $(L_x,\nabla_y)$, where $\nabla_y$ is a flat $U(1)$-connection on the trivial complex line bundle $\underline{\bC}$ over $L_x$. We shall give an informal argument to support this conjecture.

The organization of this paper is as follows. In Section \ref{sec2}, we go through the simple but illustrative example of $X=\bP^1$ in details, where we can easily classify all the holomorphic disks. In Section \ref{sec3}, we study the case of $X=\bP^2$ and give an argument to justify our heuristic counting.\\

\noindent\textbf{Acknowledgements.} Both of us are heavily indebted to Prof. Shing-Tung Yau for his guidance and kind support over the years. It is our great pleasure to dedicate this article to Prof. Yau on the occasion of his 60th birthday.

We would like to thank Yong-Geun Oh for numerous useful discussions during his visit to CUHK in the summer of 2009. Thanks are also due to Cheol-Hyun Cho and Ke Zhu for their helpful comments on Floer theory. The research of the first author (K.W.C.) was supported by Harvard University and the Croucher Foundation Fellowship. The research of the second author (N.C.L.) was partially supported by RGC grants from the Hong Kong Government.

\section{The toy example: $X=\bP^1$}\label{sec2}

We equip $X=\bP^1$ with the toric K\"{a}hler form $\omega$($=\omega_{\textrm{FS}}$, the Fubini-Study form) associated to the moment polytope $P=[0,t]\subset\bR$ where $t>0$. Let $\mu:X\to P$ be the moment map. The mirror Landau-Ginzburg model is given by
$$\check{X}=\{z\in\bC^*:q<|z|<1\}\subset\bC^*,\ W=z+\frac{q}{z},$$
where $z$ is a complex coordinate on $\bC^*$ and $q=e^{-t}$. Let $L_0$ be the Lagrangian torus fiber over the center of mass $x_0=t/2\in P$, so that $L_0$ is a great circle in $\bP^1$. Equipping $L_0$ with the trivial flat $U(1)$-connection $\nabla$, we obtain an A-brane $(L_0,\nabla)$ on $X=\bP^1$.

According to the SYZ conjecture, the mirror B-brane should be the structure sheaf of the point $e^{-x_0}=\sqrt{q}\in\check{X}$. A matrix factorization corresponding to this sheaf (which is a skyscraper sheaf supported at $\sqrt{q}\in\check{X}$), through the equivalence established by Orlov \cite{Orlov03}, is given by the $2\times2$ matrix
$$M_0=\left( \begin{array}{cc}
0 & z-\sqrt{q} \\
1-\frac{\sqrt{q}}{z} & 0 \end{array} \right).$$
It is easy to check that $M_0^2=(W(z)-W(\sqrt{q}))\textrm{Id}$.

Our goal is to demonstrate how the matrix factorization $M_0$ can be obtained directly by SYZ mirror transformations (or Fourier transform). This in turn shows that the equivalence (\ref{hms1}) makes a natural sense from the point of view of the SYZ conjecture.

The first step is to construct a Lagrangian $L\subset X$, which is Hamiltonian isotopic to $L_0$ and is a multi-section over a certain open interval $U\subset\textrm{Int}(P)=(0,t)$. We work with the symplectic Darboux coordinates $(x,u)$ on $X=\bP^1$, where $x\in(0,t)$ and $u\in\bR/2\pi\bZ$. This means that, in these coordinates, the complement of the toric boundary divisors $\bP^1\setminus\{0,\infty\}\subset\bP^1$ is realized as the quotient $T^*\textrm{Int}(P)/\bZ$ of the cotangent bundle of $\textrm{Int}(P)$ by the lattice $\bZ\subset T^*\textrm{Int}(P)$ of locally constant 1-forms, and the symplectic form $\omega$ restricted to $\bP^1\setminus\{0,\infty\}$ is given by the canonical one, i.e. $\omega|_{\bP^1\setminus\{0,\infty\}}=dx\wedge du$.

\begin{figure}[htp]
\centering
\includegraphics[scale=1.3]{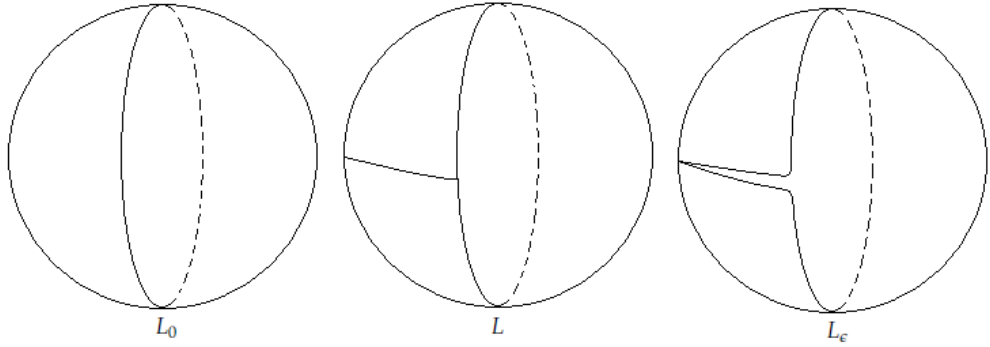}
\caption{The Lagrangians $L_0$, $L$ and $L_\epsilon$ in
$\bP^1$.}\label{CP^1}
\end{figure}

Define a map $\tau:[0,3]\rightarrow\bP^1$ by
\begin{equation*}
\tau(s)=\left\{\begin{array}{ll}
((1-s)t/2,0) & \textrm{for $0\leq s\leq1$;}\\
((s-1)t/2,0) & \textrm{for $1\leq s\leq2$;}\\
(t/2,2\pi(s-2)) & \textrm{for $2\leq s\leq3$.}
\end{array} \right.
\end{equation*}
This map gives a Lagrangian subspace in $\bP^1$, in the sense that $\tau^*\omega=0$. Denote this Lagrangian by $L$, which is Hamiltonian isotopic to $L_0$ because it cuts $\bP^1$ into two equal halves. Note that $L$ is the union of $L_0$ with two copies of the zero section of the moment map $\mu$ over $[0,t/2]$; it is singular and is not even immersed in $\bP^1$. However, we can deform $L$ to get an embedded Lagrangian submanifold $L_\epsilon\cong S^1$ in $\bP^1$. For example, this can be achieved by separating the two zero sections a little bit and then smoothing out the corners, in the way shown in Figure \ref{CP^1}. One has to be a bit careful in keeping the Hamiltonian isotopy class unchanged. Such a Lagrangian $L_\epsilon$ is of the form shown in the rightmost of Figure \ref{CP^1}.

Now, let $U=(0,t/2)\subset\textrm{Int}(P)=(0,t)$, and denote by $L_x$ the Lagrangian torus fiber over a point $x\in U$. For any $x\in U$, the Lagrangian $L$ (or more precisely, $L_\epsilon$) intersects $L_x$ in two points, which we label by $+$ and $-$. See Figure \ref{discs} below.

For $p,q\in L\cap L_x$, denote by $\pi_2(X;L,L_x;p,q)$ the set of homotopy classes of maps $\varphi:D^2\rightarrow X$ with $\varphi(\partial_+D^2)\subset L$, $\varphi(\partial_-D^2)\subset L_x$, $\varphi(-1)=p$ and $\varphi(1)=q$, where $D^2=\{z\in\bC:|z|\leq1\}$, $\partial D^2=\{z\in D^2:|z|=1\}$, $\partial_+D^2=\{z\in\partial D^2:\textrm{Im}(z)>0\}$ and $\partial_-D^2=\{z\in\partial D^2:\textrm{Im}(z)<0\}$. We will also denote such a map by $\varphi:(D^2,\partial D^2,-1,1)\rightarrow(X,L\cup L_x,p,q)$. Let $\pi_1(L_x;p,q)$ be the set of homotopy classes of maps $\gamma:[0,1]\rightarrow L_x$ such that $\gamma(0)=p$ and $\gamma(1)=q$; $\pi_1(L;p,q)$ is defined analogously. We have boundary maps
\begin{eqnarray*}
\partial_+:\pi_2(X;L,L_x;p,q)\rightarrow\pi_1(L;p,q),\\
\partial_-:\pi_2(X;L,L_x;p,q)\rightarrow\pi_1(L_x;p,q).
\end{eqnarray*}

We want to classify, for any $p,q\in L\cap L_x$, all maps $\varphi:(D^2,\partial D^2,-1,1)\rightarrow(X,L\cup L_x,p,q)$ which are nontrivial and holomorphic. More precisely, we shall look for maps which can be deformed to holomorphic disks $\varphi_\epsilon:(D^2,\partial D^2)\rightarrow(X,L_\epsilon\cup L_x)$ as $L$ is being smoothed to give $L_\epsilon$. In the $\bP^1$ case, it is not hard to see that there are totally four such holomorphic disks: two for $p=+,q=-$ and two for $p=-,q=+$. This is illustrated in Figure \ref{discs} below.

\begin{figure}[htp]
\centering
\includegraphics[scale=0.8]{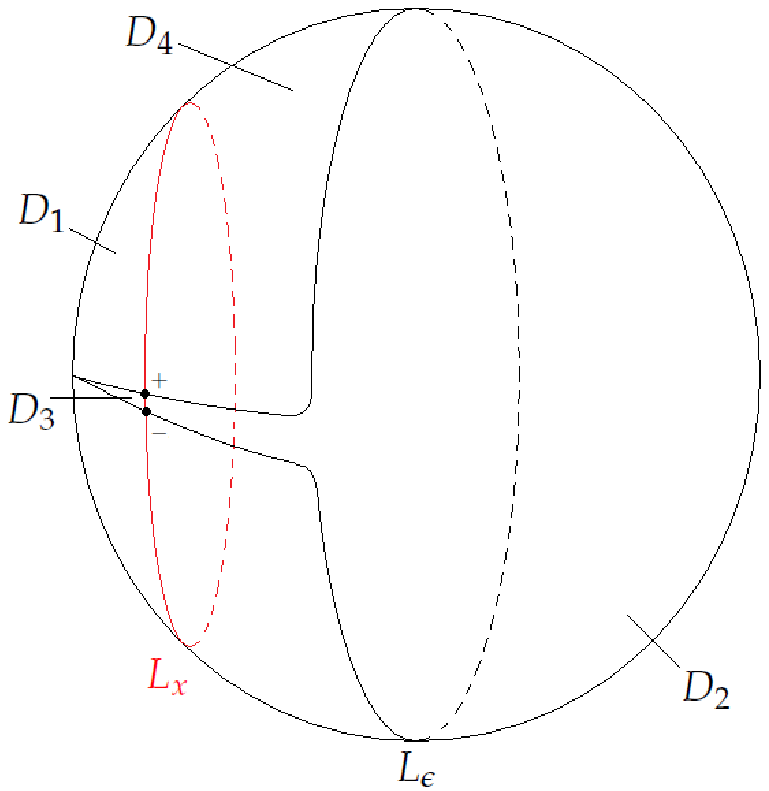}
\caption{}\label{discs}
\end{figure}

For $p=+, q=-$, the two holomorphic disks are given by $\varphi_1,\varphi_2:D^2\rightarrow X$ and their images in $\bP^1$ are denoted by $D_1$, $D_2$ in Figure \ref{discs}. $\partial_-[\varphi_1]=[\gamma_l]\in\pi_1(L_x;+,-)$ is the class of the major arc in $L_x$ going from $+$ to $-$, while $\partial_-[\varphi_2]=[\gamma_s]\in\pi_1(L_x;+,-)$ is the class of the minor arc in $L_x$ going from $+$ to $-$. As $L_\epsilon$ is deformed back to $L$, the areas of $D_1$ and $D_2$ tend to $x$ and $t/2$ respectively.

For $p=-, q=+$, the two holomorphic disks are given by $\varphi_3,\varphi_4:D^2\rightarrow X$, and their images in $\bP^1$ are denoted by $D_3$, $D_4$ in Figure \ref{discs}. $\partial_-[\varphi_3]=[\gamma_l]\in\pi_1(L_x;-,+)$ is the class of the minor arc in $L_x$ going from $-$ to $+$, while $\partial_-[\varphi_4]=[\gamma_s]\in\pi_1(L_x;-,+)$ is the class of the major arc in $L_x$ going from $-$ to $+$. As $L_\epsilon$ is deformed back to $L$, the areas of $D_3$ and $D_4$ tend to $0$ and $t/2-x$ respectively.

Now, using these holomorphic disks, we shall define a matrix-valued function $\Psi_{L,\nabla}$ over $U$ associated to the A-brane $(L,\nabla)$ as follows. For any $x\in U$ and for any $p,q\in L\cap L_x=\{+,-\}$, define
$$\Psi_{L,\nabla}^{p,q}(x,[\gamma])=\sum_{\substack{[\varphi]\in\pi_2(X;L,L_x;p,q),\\
\partial_+[\varphi]=[\gamma]}}\pm n([\varphi])\exp(-\int_{D^2}\varphi^*\omega_{\bar{X}})\textrm{hol}_{\nabla}([\gamma]),$$
for $[\gamma]\in\pi_1(L_x;p,q)$, where $n([\varphi])$ is the number of holomorphic disks $\varphi:(D^2,\partial D^2,-1,1)\rightarrow(X,L\cup L_x,p,q)$ representing the class $[\varphi]\in\pi_2(X;L,L_x;p,q)$, and the sign depends on the orientation of the moduli space of holomorphic disks with class $[\varphi]$ as discussed in \cite{FOOO06}.

Identify $\pi_1(L_x;p,q)$ with $\bZ$ for any $p,q\in L\cap L_x$. Then, as $x\in U$ varies, we get a function $\Psi_{L,\nabla}^{p,q}:U\times\bZ\rightarrow\bR$. By the above classification of disks, we have\footnote{In the two-dimensional case, there is a simple rule to determine the orientation of moduli spaces of holomorphic disks and hence the signs; see e.g. Chapter 8 in \cite{ABCDGKMSSW09}.}
\begin{eqnarray*}
\Psi_{L,\nabla}^{+,-}(x,v) & = & \left\{
\begin{array}{ll}
e^{-x} & \textrm{if $v=1$}\\
-e^{-t/2}=-\sqrt{q} & \textrm{if $v=0$}\\
0 & \textrm{otherwise,}
\end{array} \right.\\
\Psi_{L,\nabla}^{-,+}(x,v) & = & \left\{
\begin{array}{ll}
e^{-0}=1 & \textrm{if $v=0$}\\
-e^{-(t/2-x)}=-\sqrt{q}/e^{-x} & \textrm{if $v=-1$}\\
0 & \textrm{otherwise,}
\end{array} \right.
\end{eqnarray*}
and $\Psi_{L,\nabla}^{p,q}=0$ if $p=q$. Let $\Psi_{L,\nabla}$ be the matrix-valued function on $U\times\bZ$ given by
$$\Psi_{L,\nabla}=\left(
\begin{array}{cc}
0 & \Psi_{L,\nabla}^{+,-} \\
\Psi_{L,\nabla}^{-,+} & 0
\end{array} \right).$$
We regard $\Psi_{L,\nabla}$ as an object in the A-model of $X=\bP^1$.

From the perspective of the SYZ conjecture \cite{SYZ96} (see also \cite{Chan-Leung08a}, \cite{Chan-Leung08b}), the mirror manifold $\check{X}$ is constructed as the moduli space of pairs $(L_x,\nabla_y)$, where $L_x$ is a Lagrangian torus fiber of the moment map $\mu:X\rightarrow P$ and $\nabla_y$ ($y\in\bR/2\pi\bZ$) is a flat $U(1)$-connection on the trivial line bundle $\underline{\bC}$ over $L_x$. The complex coordinate on $\check{X}\subset\bC^*$ is given by $z=\exp(-x+iy)$. Now, the SYZ mirror transformation is nothing but fiberwise Fourier series (see \cite{Chan-Leung08a}, \cite{Chan-Leung08b}). Hence we have
\begin{thm} The SYZ mirror transformation of $\Psi_{L,\nabla}$ is
given by
$$\mathcal{F}(\Psi_{L,\nabla})=\left(
\begin{array}{cc}
0 & z-\sqrt{q} \\
1-\frac{\sqrt{q}}{z} & 0
\end{array} \right),$$
and this is equal to the matrix factorization $M_0$ corresponding to the structure sheaf of the point $e^{-x_0}=\sqrt{q}\in\check{X}$, which is mirror to the A-brane $(L_0,\nabla)$.
\end{thm}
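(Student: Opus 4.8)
The plan is to compute the SYZ mirror transformation $\mathcal{F}(\Psi_{L,\nabla})$ directly, entry by entry, and then to recognize the result as $M_0$. Recall from \cite{Chan-Leung08a}, \cite{Chan-Leung08b} that the SYZ transformation is nothing but fiberwise Fourier series: it sends a function $f:U\times\bZ\rightarrow\bR$ (here $\bZ\cong\pi_1(L_x;p,q)$ is the index recording the homotopy class of the boundary arc on $L_x$) to the function $\mathcal{F}(f):\check{X}\rightarrow\bC$ given by $\mathcal{F}(f)(z)=\sum_{v\in\bZ}f(x,v)\,e^{ivy}$, where $z=\exp(-x+iy)$ is the complex coordinate on $\check{X}$ built from the base coordinate $x\in U$ and the fiber coordinate $y\in\bR/2\pi\bZ$. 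The reason this produces a Laurent polynomial in $z$ is that, by construction, each nonzero value $\Psi_{L,\nabla}^{p,q}(x,v)$ is a sign times the symplectic area weight $\exp(-\int_{D^2}\varphi^*\omega)$, and the limiting areas of the four disks $D_1,D_2,D_3,D_4$ classified above are $x$, $t/2$, $0$ and $t/2-x$; hence each such weight equals $e^{-vx}$ (using $e^{-t/2}=\sqrt{q}$), and after multiplying by $e^{ivy}$ it becomes the monomial $z^{v}$.

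Carrying this out: since by the classification above each relevant homotopy class contains exactly one holomorphic disk, the sums defining $\mathcal{F}(\Psi_{L,\nabla}^{p,q})$ collapse to two terms each. From $\Psi_{L,\nabla}^{+,-}(x,1)=e^{-x}$ and $\Psi_{L,\nabla}^{+,-}(x,0)=-\sqrt{q}$ we obtain $\mathcal{F}(\Psi_{L,\nabla}^{+,-})(z)=z-\sqrt{q}$; from $\Psi_{L,\nabla}^{-,+}(x,0)=1$ and $\Psi_{L,\nabla}^{-,+}(x,-1)=-\sqrt{q}\,e^{x}$ we obtain $\mathcal{F}(\Psi_{L,\nabla}^{-,+})(z)=1-\sqrt{q}/z$. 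As $\Psi_{L,\nabla}^{p,p}=0$, assembling the $2\times2$ matrix yields
$$\mathcal{F}(\Psi_{L,\nabla})=\left(\begin{array}{cc} 0 & z-\sqrt{q}\\ 1-\frac{\sqrt{q}}{z} & 0\end{array}\right)=M_0.$$
It then remains to identify this with the matrix factorization mirror to $(L_0,\nabla)$. First, a direct multiplication gives $(z-\sqrt{q})(1-\sqrt{q}/z)=(z+q/z)-2\sqrt{q}=W(z)-W(\sqrt{q})$, and similarly for the product in the other order, so $M_0^2=(W-W(\sqrt{q}))\textrm{Id}$; thus $M_0$ is a matrix factorization of $(\check{X},W)$ in the sense of (\ref{MF}), with $\lambda=W(\sqrt{q})$. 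Next, by the SYZ construction the mirror B-brane of $(L_0,\nabla)$ is the skyscraper sheaf of the point with coordinate $z_0=\exp(-x_0)=\exp(-t/2)=\sqrt{q}\in\check{X}$ (note $q<\sqrt{q}<1$), and by Orlov's equivalence \cite{Orlov03} this skyscraper sheaf is represented in $MF(\check{X},W)$ precisely by the matrix $M_0$ written above, as recorded at the beginning of this section. Hence $\mathcal{F}(\Psi_{L,\nabla})=M_0$ is the matrix factorization mirror to $(L_0,\nabla)$.

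The computation above is essentially bookkeeping once the classification of the holomorphic disks and their limiting areas is granted, so I do not expect a genuine analytic obstacle. The two points that need care are: (i) the signs $\pm n([\varphi])$ attached to the disks in $\pi_2(X;L,L_x;+,-)$ and in $\pi_2(X;L,L_x;-,+)$ must come out with exactly the relative minus signs displayed above, which follows from the standard orientation rule for moduli spaces of holomorphic disks in (real) dimension two (see \cite{FOOO06} and Chapter 8 of \cite{ABCDGKMSSW09}); and (ii) the disk areas must be evaluated consistently in the limit $L_\epsilon\to L$, so that the weights $\exp(-\textrm{area})$ assemble into the monomials $z^0,z^{\pm1}$ rather than into some $q$-dependent deformation thereof. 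Both are supplied by the discussion preceding the statement, so the proof reduces to recording the Fourier computation and the two verifications above.
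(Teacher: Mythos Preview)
Your proposal is correct and follows the same approach as the paper: the paper does not give a separate proof of this theorem but simply records it as an immediate consequence of the preceding disk classification together with the remark that the SYZ mirror transformation is fiberwise Fourier series, and you have made that Fourier computation explicit entry by entry. One small expository slip: it is not literally true that ``each such weight equals $e^{-vx}$'' (e.g.\ the $v=0$ weight is $-\sqrt{q}$, not $1$); what is true, and what your subsequent line-by-line computation actually uses, is that each weight times $e^{ivy}$ is a constant multiple of $z^v$, which is all that is needed.
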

Geometrically, the holomorphic disk which corresponds to the term $z$ (respectively $q/z$) in $W=z+q/z$ is cut into two holomorphic disks $D_1$ and $D_3$ (respectively, $D_2$ and $D_4$). These correspond to the factorizations of monomials
$$z=z\cdot1\textrm{ and }\frac{q}{z}=\sqrt{q}\cdot\frac{\sqrt{q}}{z}.$$
On the other hand, each of the holomorphic disks whose boundaries lie on the great circle $L_0$ (i.e. the half-spheres) is cut into a union of two discs: $D_1\cup D_4$ and $D_2\cup D_3$. These correspond to the factorizations
$$\sqrt{q}=1\cdot\sqrt{q}\textrm{ and }\sqrt{q}=\frac{\sqrt{q}}{z}\cdot z.$$

Furthermore, as mentioned in the introduction, $M_0$ should be mirror to the Floer differential $\mathfrak{m}_1=\mathfrak{m}_1((L_x,\nabla_y),(L,\nabla))$. Indeed, by definition (see e.g. \cite{FOOO06}), we have
\begin{eqnarray*}
\mathfrak{m}_1[+] & = & (e^{-A(D_1)}\textrm{hol}_{\nabla_y}(\partial_- D_1)-e^{-A(D_2)}\textrm{hol}_{\nabla_y}(\partial_- D_2))[-]\\
& = & (z-\sqrt{q})[-],\\
\mathfrak{m}_1[-] & = & (e^{-A(D_3)}\textrm{hol}_{\nabla_y}(\partial_-
D_3)-e^{-A(D_4)}\textrm{hol}_{\nabla_y}(\partial_- D_4))[+]\\
& = & (1-\frac{\sqrt{q}}{z})[+],
\end{eqnarray*}
where $A(D_i)$ denotes the symplectic area of the holomorphic disk $D_i$. Therefore, in matrix form, the Floer differential is given by
$$\mathfrak{m}_1=\left( \begin{array}{cc}
                0 & z-\sqrt{q} \\
                1-\frac{\sqrt{q}}{z} & 0
                \end{array} \right)=M_0.$$
\begin{nb}
We can deal with the case of $\bP^1\times\bP^1$ simply by taking the product of two copies of the above constructions.
\end{nb}

\section{The $\bP^2$ case}\label{sec3}

In this section, we shall try to imitate the construction of the last section to deal with the $\bP^2$ case. We will consider a Lagrangian torus fiber of the moment map of $\bP^2$ and try to find a Hamiltonian isotopic Lagrangian subspace which is a multi-section over some open subset of the moment polytope. We will then construct a matrix-valued function using a heuristic counting of holomorphic disks and show that the SYZ transformation of the function gives a matrix factorization which is mirror to the Lagrangian torus fiber that we start with.

\subsection{Mirror symmetry for $\bP^2$}

We equip $X=\bP^2$ with the toric K\"{a}hler form $\omega$ associated to the polytope given by
$$P=\{(x_1,x_2)\in\bR^2:x_1\geq0,\ x_2\geq0,\ x_1+x_2\leq t\},$$
where $t>0$; also let $\mu:X\rightarrow P$ be the moment map. Then, the mirror Landau-Ginzburg model is given by
\begin{eqnarray*}
\check{X}&=&\{(z_1,z_2)\in(\bC^*)^2:q<|z_1z_2|,|z_1|<1,|z_2|<1,\}\subset(\bC^*)^2,\\
W&=&z_1+z_2+\frac{q}{z_1z_2},
\end{eqnarray*}
where $z_1,z_2$ are coordinates on $(\bC^*)^2$ and $q=e^{-t}$.

We consider the Lagrangian torus fiber $L_0$ over the center of mass $x_0=(t/3,t/3)$ of the polytope $P$. This is the so-called \textit{Clifford torus} in $\bP^2$. As before, we equip $L_0$ with the trivial flat $U(1)$-connection $\nabla$ to give an A-brane $(L_0,\nabla)$ on $X=\bP^2$.

Applying the SYZ construction, the mirror B-brane of $(L_0,\nabla)$ should be given by the structure sheaf of the point $e^{-x_0}=(q^{1/3},q^{1/3})\in\check{X}$. A matrix factorization corresponding to this skyscraper sheaf is given by the $4\times4$ matrix
$$M_0=\left(\begin{array}{cccc}
0 & 0 & z_1-q^{1/3} & z_2-\frac{q^{2/3}}{z_1} \\
0 & 0 & -(1-\frac{q^{1/3}}{z_2}) & 1-\frac{q^{1/3}}{z_1}\\
1-\frac{q^{1/3}}{z_1} & -(z_2-\frac{q^{2/3}}{z_1}) & 0 & 0\\
1-\frac{q^{1/3}}{z_2} & z_1-q^{1/3} & 0 & 0
\end{array}\right).$$
It is straightforward to check that $M_0^2=(W(z_1,z_2)-W(q^{1/3},q^{1/3}))\textrm{Id}$.

As in the case of $\bP^1$, we shall work with symplectic Darboux coordinates $(x_1,x_2,u_1,u_2)$ on $\bP^2$, where $(x_1,x_2)\in\textrm{Int}(P)$ and $u_1,u_2\in\bR/2\pi\bZ$. In these coordinates, the complement of the toric boundary divisor $X\setminus D_\infty$ can be realized as the quotient $T^*\textrm{Int}(P)/\bZ^2$ of the cotangent bundle
of $\textrm{Int}(P)$ by the lattice $\bZ^2$ of locally constant 1-forms, and the symplectic form $\omega$ restricted to $X\setminus D_\infty$ is the canonical symplectic form, i.e. $\omega|_{X\setminus D_\infty}=dx_1\wedge du_1+dx_2\wedge du_2$.

\subsection{The deformed Lagrangian $L$ as a union of Lagrangian strata}

To construct the deformed Lagrangian subspace, we consider the map $$\tau^2:=\tau\times\tau:[0,3]^2\rightarrow\bC^2\subset\bP^2,$$
where $\tau:[0,3]\rightarrow(0,t/3)\times\bR/2\pi\bZ\subset\bC\subset\bP^1$ is the map defined in the previous section for the $\bP^1$ case, i.e.
\begin{equation*}
\tau(s)=\left\{\begin{array}{ll}
((1-s)t/3,0) & \textrm{for $0\leq s\leq1$;}\\
((s-1)t/3,0) & \textrm{for $1\leq s\leq2$;}\\
(t/3,2\pi(s-2)) & \textrm{for $2\leq s\leq3$.}\end{array} \right.
\end{equation*}
This defines a Lagrangian subspace $L$ in $\bP^2$, in the sense that $(\tau^2)^*\omega=0$. The image of $\tau^2$ is the union of three types of Lagrangian strata: a copy of the two-torus $L_0$, two $S^1\coprod S^1$-fibrations over the line segments $[0,t/3]\times\{t/3\}$ and $\{t/3\}\times[0,t/3]$ and 4 copies of the zero section of the moment $\mu$ over the square $[0,t/3]\times[0,t/3]$. See Figure \ref{CP^2}. In particular, $L$ is a multi-section over the open set $U:=(0,t/3)\times(0,t/3)\subset\textrm{Int}(P)$.

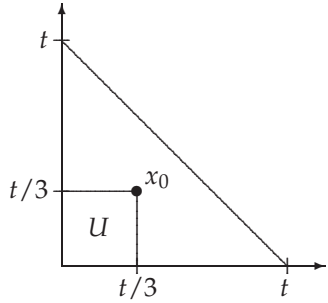
\begin{figure}[ht]
\setlength{\unitlength}{1mm}
\begin{picture}(100,40)
\put(40,2){\vector(0,1){35}} \put(40,2){\vector(1,0){35}} \curve(39,32, 41,32) \curve(70,1, 70,3) \put(37,31){$t$}
\put(69,-2){$t$} \curve(40,32, 70,2) \curve(50,2, 50,12) \curve(40,12, 50,12) \curve(39,12, 41,12) \curve(50,1, 50,3) \put(47.5,-2){$t/3$} \put(33,11){$t/3$} \put(49,11){$\bullet$} \put(51,13){$x_0$} \put(43.5,6){$U$}
\end{picture}
\caption{The polytope $P$ associated to $X=\bP^2$.}\label{CP^2}
\end{figure}

Note that $L$ can be deformed back to $L_0$ by deforming the map $\tau$ (and shrinking the open subset $U$). Hence, at least intuitively, we may regard $L$ to be in the same Lagrangian isotopy class as $L_0$. As both $L$ and $L_0$ sit inside $\bC^2$, we may further regard them as Hamiltonian isotopic to each other. Also, similar to the $\bP^1$ case, we can deform $L$ slightly to a Lagrangian torus $L_\epsilon$ embedded in $\bP^2$. For example, we can take the product of the deformed family of Lagrangian submanifolds $L_\epsilon\subset\bC\subset\bP^1$ (as shown in Figure \ref{CP^1}). This gives a family of Lagrangian submanifolds in $\bC^2$, also denoted by $L_\epsilon\subset\bC^2$, abusing notations. Let $V$ be a neighborhood of the line at infinity in $\bP^2$. Then $\bP^2\setminus V$ is symplectomorphic to $\bC^2\setminus V'$, where $V'\subset\bC^2$ is some open subset. Hence, our deformed Lagrangian subspace $L$ is indeed the limit of a family of Lagrangian tori $L_\epsilon\subset\bP^2$.

For any $x\in U$, denote by $L_x$ the Lagrangian torus fiber of the moment map $\mu$ over $x$. Then, $L$ and $L_x$ intersect at 4 points. Actually, $L$ and $L_x$ intersect at the same point (in the zero section) with multiplicity 4. But we shall think of these as 4 distinct intersection points. One way is to regard them as the intersection points of $L_\epsilon$ with $L_x$. Label these 4 points by $L\cap L_x=\{++,-+,+-,--\}$.

\subsection{Permissible pairs of paths}

Now, we want to find all the nontrivial holomorphic maps $\varphi:D^2\rightarrow X$ with $\varphi(\partial_+D^2)\subset L$, $\varphi(\partial_-D^2)\subset L_x$, $\varphi(-1)=p$ and $\varphi(1)=q$,\footnote{As in the $\bP^1$ case, we will also denote such a map by $\varphi:(D^2,\partial D^2,-1,1)\rightarrow(X,L\cup L_x,p,q)$.} for any pair of intersection points $p,q\in L\cap L_x=\{++,-+,+-,--\}$, and use these data to define the matrix-valued function $\Psi_{L,\nabla}$.

As we mentioned in the introduction, since $L$ is highly singular, it is very hard to classify these holomorphic disks. Hence, instead, we shall use a heuristic way to count the disks. This is done by sorting out the pairs of paths $\gamma_+:[0,1]\rightarrow L$, $\gamma_-:[0,1]\rightarrow L_x$ which, conjecturally, would form boundaries of holomorphic disks. In the next subsection, we will give an argument to justify our heuristic counting; to make this argument into a proof, however, we will need some sort of "gluing theorem" which is not available at the time of writing.

To start with, recall that the Lagrangian $L$ is given by the map $\tau^2:[0,3]^2\rightarrow\bP^2$. So we can regard a path in $L$ as a path in $[0,3]^2$. To fix notations, we subdivide $[0,3]^2$ into 9 regions, as shown in
Figure \ref{[0,3]^2} below. The Lagrangian subspace $L\subset\bP^2$ consists of 4 sections over $U\subset P$, which are parameterized by the regions labeled as $++,-+,+-,--$. Over each point of the line segments $\{t/3\}\times[0,t/3]$ and $[0,t/3]\times\{t/3\}$ in $P$, $L$ consists of two circles and these parts of $L$ are parameterized by the regions labeled as I, II, III, IV. The region labeled as 0 corresponds to the stratum of $L$ which is a copy of $L_0$.

\begin{figure}[ht]
\setlength{\unitlength}{1mm}
\begin{picture}(100,47)
\curve(28,0, 73,0) \curve(28,0, 28,45) \curve(28,45, 73,45)
\curve(73,0, 73,45) \curve(28,15, 73,15) \curve(28,30, 73,30)
\curve(43,0, 43,45) \curve(58,0, 58,45) \put(64,36.5){$0$}
\put(35,36.5){I} \put(49,36.5){II} \put(63,21.5){III}
\put(63,6.5){IV} \put(47.5,21.5){$++$} \put(32.5,21.5){$-+$}
\put(32.5,6.5){$--$} \put(47.5,6.5){$+-$}
\end{picture}
\caption{The 9 regions in $[0,3]^2$.}\label{[0,3]^2}
\end{figure}

Let $p,q\in L\cap L_x=\{++,-+,+-,--\}$ be two distinct intersection points, say $p=++$, $q=-+$. Then they are represented by two points in the corresponding regions in $[0,3]^2$. See the right hand side of Figure \ref{p,q}.

\begin{figure}[htp]
\setlength{\unitlength}{1mm}
\begin{picture}(100,55)
\curve(50,3, 95,3) \curve(50,3, 50,48) \curve(50,48, 95,48)
\curve(95,3, 95,48) \curve(50,18, 95,18) \curve(50,33, 95,33)
\curve(65,3, 65,48) \curve(80,3, 80,48) \put(68,26.5){$\bullet$}
\put(59.5,26.5){$\bullet$} \put(59.5,23.5){$q$} \put(68,23.5){$p$}
\put(70,-0.5){$L$} \put(10,23){\vector(1,0){30}}
\put(10,23){\vector(0,1){30}} \put(35,19){$(1,0)$}
\put(1,49){$(0,1)$} \curve(10,48, 35,48) \curve(35,23, 35,48)
\put(9.25,22.1){$\bullet$} \put(7,20){$p,q$} \put(22,19){$L_x$}
\put(10,12){\vector(0,1){6}} \put(5,8){$\bullet$}
\put(12,8){$\bullet$} \put(5,-2){$\bullet$} \put(12,-2){$\bullet$}
\put(-1,10){$-+$} \put(-1,-3){$--$} \put(14,10){$++$}
\put(14,-3){$+-$}
\end{picture}
\caption{}\label{p,q}
\end{figure}

While $++,-+,+-,--$ are the same point (the origin) in $L_x$ and in the image of the map $\tau^2:[0,3]^2\rightarrow\bP^2$, we shall keep in mind that they should be viewed as 4 distinct intersection points between $L_\epsilon$ and $L_x$. Their relative positions are, for instance, as shown in the left hand side of Figure \ref{p,q}.

Now, let $\bP^1_H$ (resp. $\bP^1_V$) be the line in $\bP^2$ which passes through the intersection points $L\cap L_x$ (remember that they are in fact the same point) and the torus-invariant point $[1:0:0]$ (resp. $[0:1:0]$) at infinity, where we have used the homogeneous coordinates on $\bP^2$. Then the intersections of $L$ and $L_x$ with $\bP^1_H$ (or $\bP^1_V$) resemble the situation of $\bP^1$ as shown in Figure \ref{discs}. To count holomorphic disks, we will need an identification of each pair of points $p,q$ (viewed inside $\bP^1_H$ or $\bP^1_V$) with either $+,-$ or $-,+$ (but not both) in Figure \ref{discs}. Without loss of generality, we identify both $++,-+$ and $+-,--$ with $+,-$ in $\bP^1_H$, and we identify both $++,+-$ and $-+,--$ with $+,-$ in $\bP^1_V$.

Recall that $L$ is a union of three types of Lagrangian strata: a copy of the two-torus $L_0$, two $S^1\coprod S^1$-fibrations over the line segments $[0,t/3]\times\{t/3\}$ and $\{t/3\}\times[0,t/3]$, and 4 copies of the zero section of the moment $\mu$ over the square $[0,t/3]\times[0,t/3]$. Accordingly, the image of a holomorphic disk $\varphi:(D^2,\partial D^2,-1,1)\rightarrow(X,L\cup L_x,p,q)$ can be broken down into several parts consisting of the following three types:
\begin{enumerate}
\item[$\bullet$] a disk in $\bP^1_H$ (or $\bP^1_V$) of the form $D_1$, $D_2$, $D_3$ or $D_4$ shown in Figure \ref{discs},
\item[$\bullet$] a disk whose boundary lies in one of the two $S^1\coprod S^1$-fibrations, and
\item[$\bullet$] a disk with boundary in $L_0$.
\end{enumerate}
Conversely, any combination of these three types of disks is a candidate for a holomorphic disk $\varphi:(D^2,\partial D^2,-1,1)\rightarrow(X,L\cup L_x,p,q)$. The following definition describes which combinations are allowed and will be counted.

Before we state the definition, recall that for each holomorphic disk $\varphi:D^2\to X$ which represents a class in $\pi_2(X;L,L_x;p,q)$, $\partial_+\varphi$ is a path in $L$ going from $q$ to $p$ and $\partial_-\varphi$ is a path in $L_x$ going from $p$ to $q$. We regard $\partial_+\varphi$ as a path in the domain $[0,3]^2$ of $\tau^2$. Let $\iota:[0,1]^2\rightarrow L_x\subset X$ be the map which defines the two-torus $L_x$ (see the left hand side of Figure \ref{p,q}). Then we regard $\partial_-\varphi$ as a path in the domain $[0,1]^2$ of $\iota$.
\begin{defn}\label{defn3.1}
Let $\bP^1_H$ and $\bP^1_V$ be as above. Let $p,q\in L\cap L_x=\{++,-+,+-,--\}$ be two distinct intersection points. Let $\gamma_-:[0,1]\rightarrow[0,1]^2$ and $\gamma_+:[0,1]\rightarrow[0,3]^2$ be two oriented, connected paths in $[0,1]^2$ and $[0,3]^2$ respectively such that $\iota\circ\gamma_-(0)=p$, $\iota\circ\gamma_-(1)=q$ and $\tau^2\circ\gamma_+(0)=q$, $\tau^2\circ\gamma_+(1)=p$. We say that the pair of paths $(\gamma_+,\gamma_-)$ is permissible if the following conditions are satisfied:
\begin{enumerate}
\item[1.]

$\gamma_-$ is simple path which is compatible with the relative positions of the points $\{++,-+,+-,--\}$ shown in the left hand side of Figure \ref{p,q}.; we also allow $\gamma_-$ to be an oriented constant path.

\item[2.]

$\gamma_+$ is a piecewise linear, simple path in $[0,3]^2$, which descends to a closed curve in $L$.

\item[3.]

The union $(\tau^2\circ\gamma_+)([0,1])\cup(\iota\circ\gamma_-)([0,1])\subset L\cup L_x$ of the images of the two paths is a union of the following three kinds of paths:
\begin{enumerate}
\item[(i)] The boundary of a disk in $\bP^1_H$ or $\bP^1_V$ of the form $D_1$, $D_3$ or $D_4$ shown in Figure \ref{discs}, or a pair of line segments with opposite orientations (bounding a disk with zero area) in $\bP^1_H$ or $\bP^1_V$,
\item[(ii)] A pair of line segments with opposite orientations (bounding a disk with zero area) in the $S^1\coprod S^1$-fibration over the line segment $\{t/3\}\times[0,t/3]\subset P$ contained in $L$.
\item[(iii)] The boundary of a Maslov index two disk in $(X,L_0)$ intersecting the line at infinity.
\end{enumerate}

\item[4.]

(Balancing condition) The union $(\tau^2\circ\gamma_+)([0,1])\cup(\iota\circ\gamma_-)([0,1])\subset L\cup L_x$ is either the boundary of a disk in $\bP^1_H$ or $\bP^1_V$, or the union of 3 disks, each from one of the three types (i), (ii), (iii) listed above.
\end{enumerate}
\end{defn}
Essentially, we are only allowing certain combinations of disk components as possible candidates for a holomorphic disk $\varphi:(D^2,\partial D^2,-1,1)\rightarrow(X,L\cup L_x,p,q)$. For a justification of this definition, see the next subsection.

In the meantime, our task is to classify all permissible pairs of paths $(\gamma_+,\gamma_-)$. This is given by the following theorem and diagrams.

\begin{figure}[htp]
\centering
\includegraphics[scale=1.5]{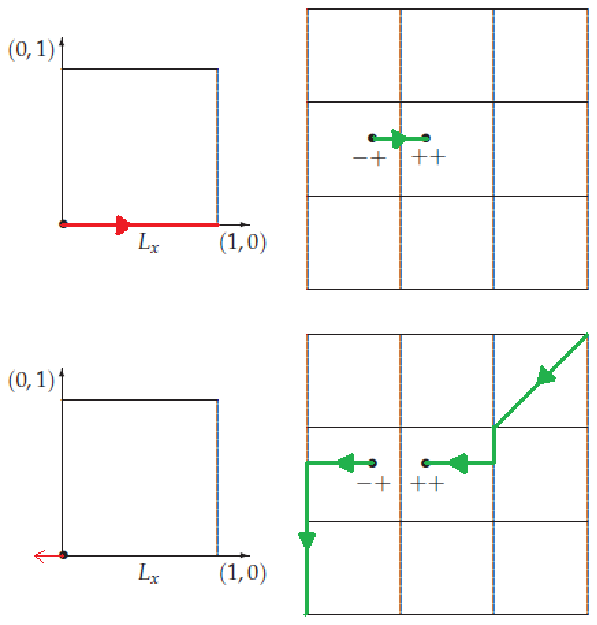}
\caption{$\gamma_+,\gamma_-$ for $p=++$, $q=-+$.}\label{++,-+}
\includegraphics[scale=1.5]{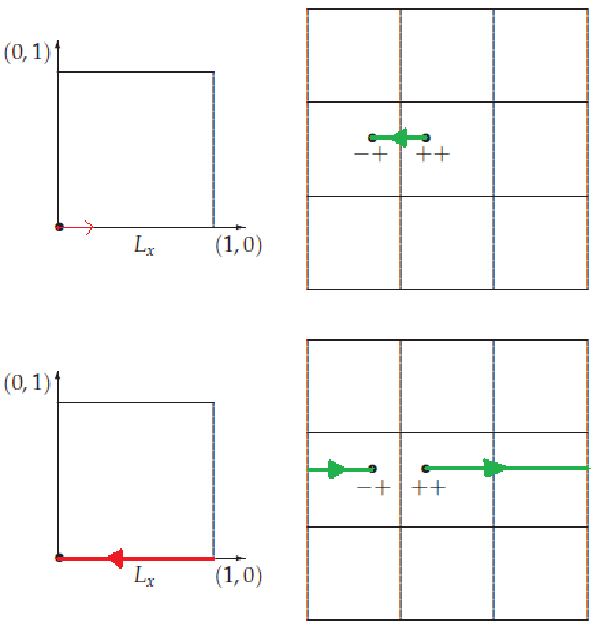}
\caption{$\gamma_+,\gamma_-$ for $p=-+$, $q=++$.}\label{-+,++}
\end{figure}
\begin{figure}[htp]
\centering
\includegraphics[scale=1.5]{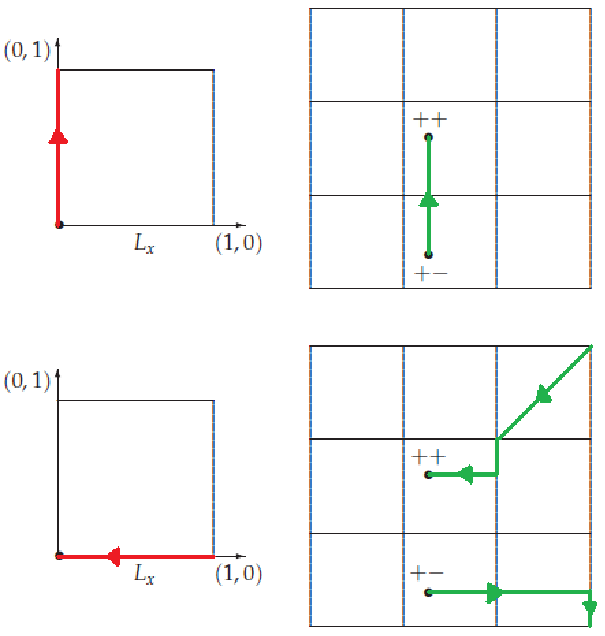}
\caption{$\gamma_+,\gamma_-$ for $p=++$, $q=+-$.}\label{++,+-}
\includegraphics[scale=1.5]{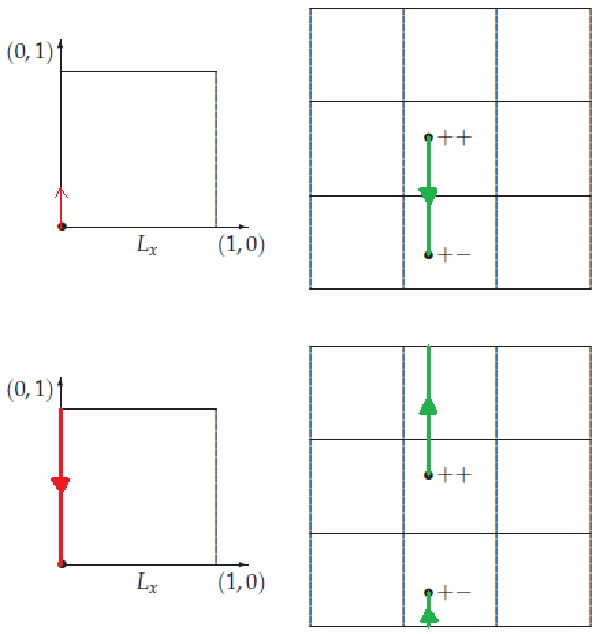}
\caption{$\gamma_+,\gamma_-$ for $p=+-$, $q=++$.}\label{+-,++}
\end{figure}

\begin{thm}\label{thm3.1}
Let
$(\gamma_+:[0,1]\rightarrow[0,3]^2,\gamma_-:[0,1]\rightarrow[0,1]^2)$ be a permissible pair of paths. Then, up to reparameterizations, $(\gamma_+,\gamma_-)$ belongs to the following list (see Figures~\ref{++,-+}, \ref{-+,++}, \ref{++,+-} and \ref{+-,++} for illustrations; the paths $\gamma_-,\gamma_+$ are drawn as thick red and green lines respectively, and a thin red arrow indicates oriented constant path):

For $p=++$, $q=-+$ (and by symmetry, for $p=+-$, $q=--$), either
\begin{enumerate}
\item[1.]

$\gamma_-$ is a straight line in the $(1,0)$ direction, and $\gamma_+$ is a horizontal line going to the right from $-+$ to $++$; or

\item[2.]

$\gamma_-$ is a constant path with orientation in the $(-1,0)$ direction, and $\gamma_+$ is a piecewise linear path going from $-+$ to $++$ as shown in Figure~\ref{++,-+}.
\end{enumerate}

For $p=-+$, $q=++$ (and by symmetry, for $p=--$, $q=+-$), either
\begin{enumerate}
\item[1.]

$\gamma_-$ is a constant path with orientation in the $(1,0)$ direction, and $\gamma_+$ is a horizontal line going to the left from $++$ to $-+$; or

\item[2.]

$\gamma_-$ is a straight line in the $(-1,0)$ direction, and $\gamma_+$ is a horizontal line going to the right starting from $++$ and ending at $-+$.
\end{enumerate}

For $p=++$, $q=+-$ (and by symmetry, for $p=-+$, $q=--$), either
\begin{enumerate}
\item[1.]

$\gamma_-$ is a straight line in the $(0,1)$ direction, and $\gamma_+$ is a vertical line going upwards from $+-$ to $++$; or

\item[2.]

$\gamma_-$ is a straight line in the $(-1,0)$ direction, and $\gamma_+$ is a piecewise linear path going from $+-$ to $++$ as shown in Figure~\ref{++,+-}.
\end{enumerate}

For $p=+-$, $q=++$ (and by symmetry, for $p=--$, $q=-+$), either
\begin{enumerate}
\item[1.]

$\gamma_-$ is a constant path with orientation in the $(0,1)$ direction, and $\gamma_+$ is a vertical line going downwards from $++$ to $+-$; or

\item[2.]

$\gamma_-$ is a straight line in the $(0,-1)$ direction. $\gamma_+$ is a vertical line going upwards starting from $++$ and ending at $+-$.
\end{enumerate}
Furthermore, each permissible pair of paths can be realized as the boundary of a union of disks (including disks with area zero).
\end{thm}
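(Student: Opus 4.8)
The plan is to prove Theorem~\ref{thm3.1} by a direct, exhaustive case analysis of the combinatorial constraints imposed by Definition~\ref{defn3.1}, organized according to the pair of endpoints $(p,q)$. By the symmetries exchanging the two coordinates of $[0,3]^2$ and reflecting the regions, it suffices to treat one representative in each symmetry class, say $p=++,q=-+$ and $p=++,q=+-$ (together with the reversed pairs $p=-+,q=++$ and $p=+-,q=++$); the remaining cases follow by applying the coordinate-swap and the reflection symmetries noted in the statement. So the first step is to set up these symmetries carefully and reduce to the four displayed cases.

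Next, for a fixed pair $(p,q)$, I would analyze the path $\gamma_-$ first. Since $p$ and $q$ are the same origin point of $L_x$ but are distinguished as corners of the square $[0,1]^2$ under the identification $\iota$, condition~1 forces $\gamma_-$ to be either an oriented constant path or a simple path whose image, together with the relative positions of $\{++,-+,+-,--\}$ in Figure~\ref{p,q}, is compatible with going from $p$ to $q$. This already pins $\gamma_-$ down to one of a small number of straight segments in the $(\pm 1,0)$ or $(0,\pm 1)$ directions (or a constant path in one of those directions); I would enumerate these. Then, given $\gamma_-$, I would use conditions~3 and~4 — the balancing condition — to determine $\gamma_+$. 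The key observation is that the union of images must decompose either as the boundary of a single disk of type $D_1,D_3,D_4$ in $\bP^1_H$ or $\bP^1_V$ (Figure~\ref{discs}), or as a union of exactly three pieces, one of each type (i), (ii), (iii). Since types (ii) and (iii) contribute only zero-area bigons and Maslov-index-two disks touching infinity, matching homotopy classes in $\pi_1(L;q,p)$ against the allowed boundary words forces $\gamma_+$ to be the specific piecewise-linear path claimed. I would carry this out by tracking, for each admissible $\gamma_-$, how the remaining portion of the boundary loop must close up in $L$ — i.e., which sequence of strata (the $++,-+,+-,--$ sections, the I--IV circle-fibrations, or the region~0 copy of $L_0$) the path $\gamma_+$ must pass through — and observe that in each case there is exactly one such path up to reparameterization.

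The final clause, that each permissible pair is realized as the boundary of a union of disks (including zero-area disks), I would handle by exhibiting the disks explicitly: for the $\bP^1_H$ or $\bP^1_V$ cases this is just the corresponding $D_i$ from Section~\ref{sec2} (pulled back along the embedding of $\bP^1_H$ or $\bP^1_V$), and for the three-piece cases it is the union of the appropriate $D_i$ with a zero-area bigon in the relevant $S^1\coprod S^1$-fibration and a standard Maslov-index-two disk in $(X,L_0)$ hitting the line at infinity. One then checks that the boundary orientations match up as prescribed. The main obstacle I expect is not any single computation but rather the bookkeeping of orientations and of the identifications of $p,q$ with $+,-$ inside $\bP^1_H$ versus $\bP^1_V$ (the choice made just before Definition~\ref{defn3.1}): one must verify that the "balancing" decomposition is consistent with a single choice of these identifications across all four cases, so that the four diagrams Figures~\ref{++,-+}--\ref{+-,++} are mutually compatible. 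Once that consistency is established, the enumeration itself is routine, if lengthy, planar combinatorics.
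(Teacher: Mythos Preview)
Your proposal is correct and follows essentially the same approach as the paper. The paper's proof is considerably terser: it organizes the case analysis by first invoking condition~4 to split into the one-component and three-component cases, then dispatches the one-component case wholesale by appeal to the $\bP^1$ classification (Figure~\ref{discs}), and in the three-component case uses the single key observation that connectedness of $\gamma_+$ together with condition~3(ii) forces the $\bP^1$-type piece to lie in $\bP^1_H$ rather than $\bP^1_V$, after which only the $D_4$-type and zero-area possibilities survive. Your plan to enumerate $\gamma_-$ first and then close up $\gamma_+$ through the strata will reach the same endpoint, but you may find the paper's component-count-first organization cuts down the bookkeeping you anticipate.
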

\begin{proof}
By condition 4, the disk bounded by $(\gamma_+,\gamma_-)$ consists of either one or three components.

In the one component case, we can make use of the results in the $\bP^1$ case to classify all possible cases. They are listed in Figures \ref{-+,++} and \ref{+-,++}, and the upper halves of Figures \ref{++,-+} and \ref{++,+-}.

Suppose that there are three components. Then the component sitting inside $\bP^1_H$ or $\bP^1_V$ is either a disk of the form $D_4$ in Figure \ref{discs} or a pair of line segments with opposite orientations. On the other hand, by condition 3(ii), this component must connect $p,q$ to the $S^1\coprod S^1$-fibration over the line segment $\{t/3\}\times[0,t/3]\subset P$ contained in $L$. Since $\gamma_+$ is connected, the component can only sit inside $\bP^1_H$. When the component is a disk of the form $D_4$ in Figure \ref{discs}, there is only one possible way to draw $\gamma_+$ and this is when $p=++,q=+-$ or $p=-+,q=--$. This is shown in the lower half of Figure \ref{++,+-}.
When the component is a pair of line segments with opposite orientations, there is again only one possible $\gamma_+$ and this is when $p=++,q=-+$ or $p=-+,q=--$. This is shown in the lower half of Figure \ref{++,-+}.
\end{proof}

The symplectic area of the disks bounded by the permissible pairs of paths listed in the above theorem are very easy to compute. The areas of those with one component are computed as in the $\bP^1$ case. For the two cases with three components, the area is given by $t/3$ for the case shown in the lower half of Figure \ref{++,-+} and $t/3+(t/3-x_1)=2t/3-x_1$ for the case shown in the lower half of Figure \ref{++,+-}.
\begin{defn}
Given a permissible pair $(\gamma_+,\gamma_-)$, we let $A(\gamma_+,\gamma_-)$ be the symplectic area of the (singular) disk bounded by $(\gamma_+,\gamma_-)$.
\end{defn}
Now, for any $p,q\in L_x\cap L$, we identify $\pi_1(L_x;p,q)$ with $\bZ^2$. We shall define a matrix-valued function $\Psi_{L,\nabla}$ on $U\times\bZ^2$, using the above classification of permissible pairs of paths. Each entry of $\Psi_{L,\nabla}$ is of the following form
$$\Psi_{L,\nabla}^{p,q}(x,[\gamma])=\sum_{\gamma_+}\pm\exp(-A(\gamma_+,\gamma))\textrm{hol}_\nabla(\gamma),$$
where $\gamma$ is a path in $L_x$ which represents a class $[\gamma]\in\pi_1(L_x;p,q)$ and the sum is over all paths $\gamma_+:[0,1]\rightarrow[0,3]^2$ such that the pair $(\gamma_+,\gamma)$ is permissible, for a distinct pair of intersection points $p,q$. More precisely, in view of Theorem \ref{thm3.1}, we set\footnote{Unfortunately, there is no simple rule to determine the orientation of moduli spaces of holomorphic disks in higher dimensional cases, and we have to assign the signs \textit{by hand}.}
\begin{eqnarray*}
\Psi_{L,\nabla}^{++,-+}(x,v) & = & \left\{\begin{array}{ll}
e^{-x_1} & \textrm{if $v=(1,0)$}\\
-e^{-t/3}=-q^{1/3} & \textrm{if $v=(0,0)$}\\
0 & \textrm{otherwise;} \end{array} \right.\\
\Psi_{L,\nabla}^{++,+-}(x,v) & = & \left\{\begin{array}{ll}
e^{-x_2} & \textrm{if $v=(0,1)$}\\
-e^{-(2t/3-x_1)}=-\frac{q^{2/3}}{e^{-x_1}} & \textrm{if $v=(-1,0)$}\\
0 & \textrm{otherwise;} \end{array} \right.\\
\Psi_{L,\nabla}^{--,-+}(x,v) & = & \left\{\begin{array}{ll}
-e^{-0}=-1 & \textrm{if $v=(0,0)$}\\
e^{-(t/3-x_2)}=\frac{q^{1/3}}{e^{-x_2}} & \textrm{if $v=(0,-1)$}\\
0 & \textrm{otherwise;} \end{array} \right.\\
\Psi_{L,\nabla}^{--,+-}(x,v) & = & \left\{\begin{array}{ll}
e^{-0}=1 & \textrm{if $v=(0,0)$}\\
-e^{-(t/3-x_1)}=-\frac{q^{1/3}}{e^{-x_1}} & \textrm{if $v=(-1,0)$}\\
0 & \textrm{otherwise;} \end{array} \right.\\
\Psi_{L,\nabla}^{-+,++}(x,v) & = & \left\{\begin{array}{ll}
e^{-0}=1 & \textrm{if $v=(0,0)$}\\
-e^{-(t/3-x_1)}=-\frac{q^{1/3}}{e^{-x_1}} & \textrm{if $v=(-1,0)$}\\
0 & \textrm{otherwise;} \end{array} \right.\\
\Psi_{L,\nabla}^{-+,--}(x,v) & = & \left\{\begin{array}{ll}
-e^{-x_2} & \textrm{if $v=(0,1)$}\\
e^{-(2t/3-x_1)}=\frac{q^{2/3}}{e^{-x_1}} & \textrm{if $v=(-1,0)$}\\
0 & \textrm{otherwise;} \end{array} \right.\\
\Psi_{L,\nabla}^{+-,++}(x,v) & = & \left\{\begin{array}{ll}
e^{-0}=1 & \textrm{if $v=(0,0)$}\\
-e^{-(t/3-x_2)}=-\frac{q^{1/3}}{e^{-x_2}} & \textrm{if $v=(0,-1)$}\\
0 & \textrm{otherwise;} \end{array} \right.\\
\Psi_{L,\nabla}^{+-,--}(x,v) & = & \left\{\begin{array}{ll}
e^{-x_1} & \textrm{if $v=(1,0)$}\\
-e^{-t/3}=-q^{1/3} & \textrm{if $v=(0,0)$}\\
0 & \textrm{otherwise,} \end{array} \right.
\end{eqnarray*}
and the matrix-valued function $\Psi_{L,\nabla}$ to be given by
$$\Psi_{L,\nabla}=\left(\begin{array}{cccc}
0 & 0 & \Psi_{L,\nabla}^{++,-+} & \Psi_{L,\nabla}^{++,+-}\\
0 & 0 & \Psi_{L,\nabla}^{--,-+} & \Psi_{L,\nabla}^{--,+-}\\
\Psi_{L,\nabla}^{-+,++} & \Psi_{L,\nabla}^{-+,--} & 0 & 0\\
\Psi_{L,\nabla}^{+-,++} & \Psi_{L,\nabla}^{+-,--} & 0 & 0
\end{array} \right).$$

As in the $\bP^1$ case, the mirror manifold $\check{X}$ can be constructed as the moduli space of pairs $(L_x,\nabla_y)$, where $L_x$ ($x=(x_1,x_2)\in\textrm{Int}(P)$) is a Lagrangian torus fiber of the moment map $\mu:X\rightarrow P$ and $\nabla_y$ ($y=(y_1,y_2)\in(\bR/2\pi\bZ)^2$) is a flat $U(1)$-connection on the trivial line bundle $\underline{\bC}$ over $L_x$. Also, the coordinates on $\check{X}\subset(\bC^*)^2$ are given by $z_1=\exp(-x_1+iy_1)$, $z_2=\exp(-x_2+iy_2)$. The SYZ mirror transformation is again taking fiberwise Fourier series (see \cite{Chan-Leung08a}, \cite{Chan-Leung08b}). Hence we finally comes to
\begin{thm}
The SYZ mirror transformation of $\Psi_{L,\nabla}$ is given by
$$\mathcal{F}(\Psi_{L,\nabla})=\left(\begin{array}{cccc}
0 & 0 & z_1-q^{1/3} & z_2-\frac{q^{2/3}}{z_1} \\
0 & 0 & -(1-\frac{q^{1/3}}{z_2}) & 1-\frac{q^{1/3}}{z_1}\\
1-\frac{q^{1/3}}{z_1} & -(z_2-\frac{q^{2/3}}{z_1}) & 0 & 0\\
1-\frac{q^{1/3}}{z_2} & z_1-q^{1/3} & 0 & 0
\end{array}\right),$$
and this is equal to the matrix factorization $M_0$ corresponding to the structure sheaf of the point $e^{-x_0}=(q^{1/3},q^{1/3})\in\check{X}$, which is mirror to the A-brane $(L_0,\nabla)$.
\end{thm}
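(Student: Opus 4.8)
The plan is to carry out the SYZ mirror transformation $\mathcal{F}$---which, as recalled just before the statement and established in \cite{Chan-Leung08a}, \cite{Chan-Leung08b}, is exactly fiberwise Fourier series---explicitly on each entry of the matrix-valued function $\Psi_{L,\nabla}$, and then to recognize the output as the matrix $M_0$ written down in \S3.1. After the identification $\pi_1(L_x;p,q)\cong\bZ^2$ and under the coordinates $z_j=\exp(-x_j+iy_j)$ on $\check{X}\subset(\bC^*)^2$, the transform of a function $f$ on $U\times\bZ^2$ is $\mathcal{F}(f)(z_1,z_2)=\sum_{v=(v_1,v_2)\in\bZ^2}f(x,v)\,e^{i\langle v,y\rangle}$; the essential point is that the weight $\pm\exp(-A(\gamma_+,\gamma))\,\textrm{hol}_\nabla(\gamma)$ attached to a permissible pair $(\gamma_+,\gamma)$ with $[\gamma]$ labelled by $v$ transforms to $\pm\exp(-A(\gamma_+,\gamma))\,z_1^{v_1}z_2^{v_2}$, and the symplectic areas recorded after Theorem~\ref{thm3.1} are precisely such that each such product collapses to a single Laurent monomial in $\bC[\check{X}]$ (which contains all Laurent polynomials, since $\check{X}$ is a bounded domain in $(\bC^*)^2$).

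First I would apply this recipe to the eight functions $\Psi_{L,\nabla}^{p,q}$ one at a time, reading off $v$ and $\exp(-A)$ from the list displayed just before the statement. For example $\Psi_{L,\nabla}^{++,-+}$ contributes $e^{-x_1}e^{iy_1}=z_1$ from $v=(1,0)$ together with $-q^{1/3}$ from $v=(0,0)$, so $\mathcal{F}(\Psi_{L,\nabla}^{++,-+})=z_1-q^{1/3}$; likewise $\Psi_{L,\nabla}^{++,+-}$ gives $e^{-x_2}e^{iy_2}-q^{2/3}e^{x_1}e^{-iy_1}=z_2-q^{2/3}/z_1$, and the remaining six entries $\Psi_{L,\nabla}^{--,-+}$, $\Psi_{L,\nabla}^{--,+-}$, $\Psi_{L,\nabla}^{-+,++}$, $\Psi_{L,\nabla}^{-+,--}$, $\Psi_{L,\nabla}^{+-,++}$, $\Psi_{L,\nabla}^{+-,--}$ transform in the same way to $-(1-q^{1/3}/z_2)$, $1-q^{1/3}/z_1$, $1-q^{1/3}/z_1$, $-(z_2-q^{2/3}/z_1)$, $1-q^{1/3}/z_2$ and $z_1-q^{1/3}$ respectively. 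Placing these into the $4\times4$ block pattern defining $\Psi_{L,\nabla}$ produces exactly the displayed matrix, which I would then compare entry-by-entry with the matrix $M_0$ of \S3.1 to conclude that $\mathcal{F}(\Psi_{L,\nabla})=M_0$.

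For the last assertion I would simply quote what was already noted in \S3.1: one has $M_0^2=(W(z_1,z_2)-W(q^{1/3},q^{1/3}))\textrm{Id}$, so $M_0$ is a genuine matrix factorization of $(\check{X},W)$ in the sense of (\ref{MF}) with $\lambda=W(q^{1/3},q^{1/3})$; and under Orlov's equivalence \cite{Orlov03} this matrix factorization corresponds to the skyscraper sheaf supported at $e^{-x_0}=(q^{1/3},q^{1/3})\in\check{X}$, which is precisely the B-brane that the SYZ construction assigns to the A-brane $(L_0,\nabla)$.

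Since all of the genuinely delicate input---the construction of $L$ as a union of Lagrangian strata, the classification of permissible pairs of paths in Theorem~\ref{thm3.1}, the area computations and the hand-chosen signs---has already been carried out, the argument here is bookkeeping and I expect no conceptual obstacle. The one point demanding care is maintaining consistency among the identifications fixed in \S3.3: the labelling of the four intersection points $\{++,-+,+-,--\}$, the identification of each pair with $+,-$ inside $\bP^1_H$ or $\bP^1_V$, and the choice of generators of $\pi_1(L_x;p,q)\cong\bZ^2$. A mismatch at any of these would flip a sign or invert some $z_j$ and destroy the matrix-factorization identity, so verifying at the end that $\mathcal{F}(\Psi_{L,\nabla})^2=(W-W(q^{1/3},q^{1/3}))\textrm{Id}$ provides a convenient consistency check on the whole computation.
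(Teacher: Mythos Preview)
Your proposal is correct and matches the paper's approach: the paper does not give a separate proof of this theorem, since after listing the eight functions $\Psi_{L,\nabla}^{p,q}$ and recalling that the SYZ transformation is fiberwise Fourier series, the result is an immediate entry-by-entry computation together with the identification of $M_0$ already made in \S3.1. Your write-up simply makes that computation explicit, and the final consistency check $\mathcal{F}(\Psi_{L,\nabla})^2=(W-W(q^{1/3},q^{1/3}))\textrm{Id}$ is a sensible safeguard for the sign and labelling conventions.
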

\begin{nb}
We can try to interpret the formula $M_0^2=(W(z_1,z_2)-W(q^{1/3},q^{1/3}))\textrm{Id}$ geometrically as cutting each Maslov index two holomorphic disk which corresponds to a term in
$W(z_1,z_2)$ or $W(q^{1/3},q^{1/3})$ into disks $\varphi:(D^2,\partial D^2,-1,1)\rightarrow(X,L\cup L_x,p,q)$. However, the cutting of disks in the $\bP^2$ case is not as straightforward as in the $\bP^1$ case. For example, the
factorization
$$\frac{q}{z_1z_2}=\frac{q^{2/3}}{z_1}\cdot\frac{q^{1/3}}{z_2}$$
is not directly corresponding to cutting the Maslov index two holomorphic disk which corresponds to the term $q/z_1z_2$ in $W(z_1,z_2)$ into two disks which represent classes in $\pi_2(X;L,L_x;p,q)$. Intuitively, deforming $L_0$ to $L$ splits this Maslov index two disk into two disks which correspond to the two factors $q^{2/3}/z_1$ and $q^{1/3}/z_2$ in the above factorization. Their boundaries are shown in the lower halves of Figures \ref{++,+-} and \ref{+-,++}.
\end{nb}

\subsection{A heuristic argument}

As we have mentioned in the introduction, it is hard to classify the holomorphic disks. However, we do believe that when $L$ is smoothed out to $L_\epsilon$, Definition \ref{defn3.1} would give the correct restrictions on a pair of paths $(\gamma_+,\gamma_-)$ to form the boundary of a holomorphic disk.
\begin{conj}\label{conjecture}
The set of permissible pairs of paths $(\gamma_+,\gamma_-)$ given in Theorem \ref{thm3.1} is in a bijective correspondence with the set of isomorphism classes of holomorphic disks $\{\varphi:(D^2,\partial D^2,-1,1)\rightarrow(X,L_\epsilon\cup L_x,p,q):p,q\in L_\epsilon\cap L_x\}$. Consequently, the value of the matrix factorization $M_0$ at a point $(z_1=\exp(-x_1+iy_1),z_2=\exp(-x_2+iy_2))\in\check{X}$, where $x=(x_1,x_2)\in U'$ and $y=(y_1,y_2)\in(\bR/2\pi\bZ)^2$, coincides with the Floer differential $\mathfrak{m}_1$ (computed over $\bC$) for the pair of A-branes $(L_\epsilon,\nabla)$ and $(L_x,\nabla_y)$.
\end{conj}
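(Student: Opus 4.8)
The plan is to prove the bijection by the standard degeneration-and-gluing dichotomy for holomorphic disks with Lagrangian boundary, and then to deduce the ``Consequently'' clause from the symplectic-area computations already recorded together with the preceding theorem. Keep $\epsilon>0$ small, so that $L_\epsilon$ is the embedded torus smoothing the singular Lagrangian $L=\tau^2([0,3]^2)$. First I would fix $p,q\in L_\epsilon\cap L_x$ and a nonconstant holomorphic disk $\varphi:(D^2,\partial D^2,-1,1)\rightarrow(X,L_\epsilon\cup L_x,p,q)$ of the type counted by $\mathfrak{m}_1$, i.e. with rigid (zero-dimensional, unparametrized) moduli. As $\epsilon\to0$ the tori $L_\epsilon$ converge to $L$, and by Gromov compactness in the Lagrangian-boundary setting (cf.~\cite{FOOO06}) a family $\varphi_\epsilon$ of such disks subconverges to a stable map whose disk components each have boundary on a single Lagrangian stratum of $L$ --- the two-torus $L_0$, one of the two $S^1\coprod S^1$-fibrations over $\{t/3\}\times[0,t/3]$ or $[0,t/3]\times\{t/3\}$, or one of the four zero sections of $\mu$ over $[0,t/3]^2$ --- together with $L_x$, and whose sphere components lie in the toric boundary $D_\infty$. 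The classification of Maslov-index $\leq2$ holomorphic disks in a toric manifold (Cho--Oh \cite{CO03}, Cho \cite{Cho04}) then restricts the disk components to exactly those permitted in Definition~\ref{defn3.1}(3): the $\bP^1$-type disks $D_1,D_3,D_4$ of Figure~\ref{discs} (and their area-zero degenerations) inside $\bP^1_H$ or $\bP^1_V$, area-zero bigons inside one of the two fibrations, and Maslov-index-two disks on $L_0$ meeting $D_\infty$. Connectedness of $\partial_+\varphi$ forces the limiting configuration to be a connected chain, and additivity of the relevant indices together with the rigidity requirement forces exactly the structure recorded in the balancing condition~4. Hence the limit is a permissible pair $(\gamma_+,\gamma_-)$, which by Theorem~\ref{thm3.1} lies in the stated list.

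\textbf{The gluing direction.} Conversely, given a permissible pair $(\gamma_+,\gamma_-)$ together with its associated union of disks --- glued in a connected chain along arcs lying in the strata of $L$ --- I would glue these back into a single honest holomorphic disk $\varphi_\epsilon:(D^2,\partial D^2,-1,1)\rightarrow(X,L_\epsilon\cup L_x,p,q)$ for all sufficiently small $\epsilon>0$. Each constituent is a standard toric disk, hence Fredholm regular, and with the marked points fixed ($-1\mapsto p$, $1\mapsto q$) the broken configuration has expected dimension zero, so a gluing theorem of FOOO type \cite{FOOO06} should furnish a unique nearby $\varphi_\epsilon$. Together with the previous paragraph this establishes the bijection of the conjecture, and in particular shows that each homotopy class that occurs has exactly one holomorphic representative --- the multiplicity $n([\varphi])=1$ implicit in the definition of $\Psi_{L,\nabla}$.

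\textbf{Identification of $M_0$ with $\mathfrak{m}_1$.} By the FOOO definition of the Floer differential, each matrix entry of $\mathfrak{m}_1((L_x,\nabla_y),(L_\epsilon,\nabla))$ is a sum over rigid disks of terms $\pm\exp(-A)\,\textrm{hol}_{\nabla}$, with the sign determined by the orientation of the zero-dimensional moduli space. Via the bijection just described, and the symplectic-area computations recorded after Theorem~\ref{thm3.1} (including the two three-component cases), these terms agree one-for-one with the entries of $\Psi_{L,\nabla}$, once the signs assigned by hand are chosen to match the moduli orientations. Since $\mathcal{F}(\Psi_{L,\nabla})=M_0$ by the preceding theorem, and the SYZ transformation is fibrewise Fourier series (see \cite{Chan-Leung08a}, \cite{Chan-Leung08b}), one concludes that $M_0$, evaluated at $(z_1,z_2)=(\exp(-x_1+iy_1),\exp(-x_2+iy_2))\in\check{X}$, equals $\mathfrak{m}_1$ for the A-branes $(L_\epsilon,\nabla)$ and $(L_x,\nabla_y)$.

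\textbf{The main obstacle.} The hard step is the gluing in the second paragraph. Because $L$ is not even immersed --- its four zero-section sheets and its two fibration sheets all collapse onto one another over $U$ --- the standard gluing results for holomorphic disks with Lagrangian boundary do not apply directly; what is really needed is a version of gluing controlling how holomorphic disks on $L_\epsilon$ degenerate onto the singular locus of $L$. This is exactly the ``gluing theorem which is not available at the time of writing'' alluded to earlier, and is why the statement is left as a conjecture. A secondary but genuine difficulty is the sign bookkeeping: without a canonical choice of spin structures and coherent orientations of the moduli spaces, the signs have to be fixed ad hoc, so the equality $\mathfrak{m}_1=M_0$ can at present only be asserted relative to that choice.
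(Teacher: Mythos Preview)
The statement is a \emph{conjecture}; the paper does not prove it. What follows Conjecture~\ref{conjecture} in the paper is explicitly labelled a ``heuristic argument'' and ``justification,'' not a proof: the authors go through Definition~\ref{defn3.1} condition by condition and explain informally why each restriction should be forced on the boundary of a genuine holomorphic disk, invoking the $\bP^1$ classification for condition~3(i), a Maslov-index count to exclude $D_2$, the Fukaya--Oh tree-balancing principle $v_1+v_2+v_3=0$ for condition~4, and---crucially---a choice of \emph{anisotropic} smoothing rates ($O(\epsilon)$ vertically versus $O(\epsilon^2)$ horizontally) to explain why only one of the two $S^1\coprod S^1$-fibrations appears in condition~3(ii). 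They then state plainly that a gluing theorem is missing.

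Your proposal is a cleaner, more standard packaging of the same strategy (Gromov compactness one way, gluing the other), and you correctly identify the gluing step as the principal gap. Two points, however, are understated. First, the compactness direction is not as routine as you suggest: since $L$ is not even immersed, standard Gromov compactness with Lagrangian boundary does not directly yield a stable map with boundary on the \emph{strata} of $L$; the very notion of ``disk components each have boundary on a single Lagrangian stratum'' presupposes a structure theorem for limits against a singular Lagrangian that is part of what is missing. Second, your derivation of the balancing condition~4 from ``additivity of indices and rigidity'' is not what the paper argues and does not by itself explain the asymmetry in condition~3(ii); the paper's heuristic ties that asymmetry to the specific smoothing $L_\epsilon$ (the $\epsilon$ vs.\ $\epsilon^2$ separation of intersection points), and a different smoothing would produce a different list of permissible pairs (cf.\ Figure~\ref{notpermissible}). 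So any honest proof must also pin down the smoothing and show that the bijection depends on it in the expected way. In short: your outline is in the right spirit, but both directions---not just gluing---lie outside the reach of presently available theorems, which is precisely why the authors leave the statement as a conjecture.
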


We are still far away from proving this conjecture. Nevertheless, we shall try to give a justification of Definition \ref{defn3.1} and Conjecture \ref{conjecture} in the following.

First of all, recall that the image of $L$ consists of overlapping $S^1$-fibrations (over the line segments $[0,t/3]\times\{t/3\}$ and $\{t/3\}\times[0,t/3]$) and zero sections (over $U$). Hence, if $L'$ is a certain smoothing of $L$ as described in the above conjecture, then we expect that some parts of a holomorphic disk $\varphi:(D^2,\partial D^2,-1,1)\rightarrow(X,L'\cup L_x,p,q)$ would be squeezed into a line segment when $L'$ degenerates to $L$. This is why we allow the boundary of a disk with zero area, realized as a pair of line segments with opposite orientations, to be a component of a permissible pair. This also explains why $\gamma_-$ can be a constant path.

Second, as $L$ is being smoothed out to give $L_\epsilon$, we assume that the intersection points $L\cap L_x=\{++,-+,+-,--\}$ are moving away from each other and their relative positions are as shown in the left hand side of Figure \ref{p,q}. Hence, we need to impose condition 1 in Definition \ref{defn3.1}.

On the other hand, since $L$ is a union of Lagrangian strata, a holomorphic disk $\varphi:(D^2,\partial D^2,-1,1)\rightarrow(X,L\cup L_x,p,q)$ is at best piecewise smooth. So we assume that $\gamma_+$ is piecewise smooth in condition 2 of Definition \ref{defn3.1}. That we require that it is piecewise linear is because, as shown in the $\bP^1$ case, the boundary of a holomorphic disk is linear and goes in some specific directions.

Now, since we expect that a holomorphic disk $\varphi:(D^2,\partial D^2,-1,1)\rightarrow(X,L\cup L_x,p,q)$ is piecewise smooth, it is natural to break such a disk into several components, each is bounded by a stratum of $L$ (and sometimes together with $L_x$). If the stratum is the copy of $L_0$, then the only natural candidates are the three Maslov index two disks with boundary in $L_0$; and in view of the balancing condition (condition 4) discussed below, we expect that only the disk with nontrivial intersection with the line at infinity will occur. So we impose condition 3(iii). When the stratum is the 4 copies of the zero section, we expect that a disk component would be contained in either $\bP^1_H$ or $\bP^1_V$. And as such, they should be classified in the same way as the $\bP^1$ case shown in Figure \ref{discs}. Moreover, a disk of the form $D_2$ would not occur since it is of Maslov index 4 as a disk in $\bP^2$. This explains condition 3(i).

If the stratum is one of the $S^1\coprod S^1$-fibrations, then we expect that there is no nontrivial disk whose boundary is contained entirely in this stratum. This is because the $S^1$'s can either bound a cylinder, in which case the other boundary must lie in $L_x$, or a disk with Maslov index 4 which should not be counted anyway. Therefore, the only possibility is a disk with zero area, or a pair of line segments with opposite orientations. This explains part of the reason why we impose condition 3(ii).

\begin{figure}[ht]
\setlength{\unitlength}{1mm}
\begin{picture}(100,40)
\put(50,3){\vector(-1,0){10}} \put(50,3){\vector(1,0){10}}
\put(49,-1){$\epsilon^2$} \put(38,2){$\bullet$} \put(31,1.5){$--$}
\put(60,2){$\bullet$} \put(62,1.5){$+-$} \put(38,36){$\bullet$}
\put(31,36.5){$-+$} \put(60,36){$\bullet$} \put(62,36.5){$++$}
\put(39,20){\vector(0,-1){16}} \put(39,20){\vector(0,1){16}}
\put(37,18){$\epsilon$}
\end{picture}
\caption{}\label{epsilon}
\end{figure}

Here comes a subtle point, namely, why we allow just one of the $S^1\coprod S^1$-fibrations in $L$, but not both? The reason is that when $L$ is being smoothed out and the intersection points $\{++,-+,+-,--\}$ are moving away from each other, we expect that there is a choice of the relative moving speeds of the points. For example, by imposing condition 3(ii) of Definition \ref{defn3.1} in the way we did, we have implicitly chosen the smoothing so that the distance between $++$ and $-+$ (or $+-$ and $--$) is of the order $O(\epsilon^2)$, while the distance between $++$ and $+-$ (or $-+$ and $--$) is of the order $O(\epsilon)$. See Figure \ref{epsilon} for an illustration. Intuitively, this choice means that a pair of line segments contained in the regions $++,--$ (or $-+,+-$) would bound a holomorphic disk only when they are vertical, but not horizontal. This explains why we have condition 3(ii).

If we instead choose the smoothing so that the distance between $++$ and $-+$ (or $+-$ and $--$) is of the order $O(\epsilon)$, and the distance between $++$ and $+-$ (or $-+$ and $--$) is of the order $O(\epsilon^2)$, then we would be allowing only pairs of horizontal, but not vertical, line segments in $++,--$ (or $-+,+-$) to bound a disk. In this case, for example, the following $\gamma_+$ (Figure \ref{notpermissible}) is allowed, but not the one shown in the lower half of Figure \ref{++,+-}.

\begin{figure}[htp]
\centering
\includegraphics[scale=1.5]{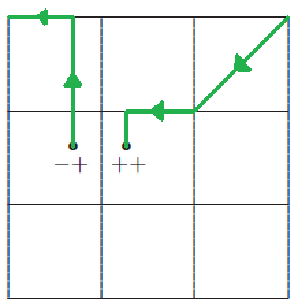}
\caption{}\label{notpermissible}
\end{figure}

Finally, we come to the balancing condition (condition 4), which is another key condition in the definition of a permissible pair of paths. In general, when we try to deform a singular holomorphic disk into a smooth holomorphic disk, there are some necessary conditions (or integrability conditions). In our situation, we expect that we should have the following condition: For three disk components whose moment map images lie in three directions, say $v_1,v_2,v_3$, the union is smoothable if $v_1+v_2+v_3=0$. This is based on the following fact: Suppose that $\gamma_1$, $\gamma_2$ and $\gamma_3$ are gradient flow line segments of three functions $f_1$, $f_2$ and $f_3$ on $\bR^2$ respectively, and they form a tree $T\subset\bR^2$ with a 3-valent vertex. Let $\Gamma_1$, $\Gamma_2$ and $\Gamma_3$ be the Lagrangian submanifolds in $T^*\bR^2$ given by the graphs of the exact 1-forms $df_1$, $df_2$ and $df_3$ respectively. Then the tree can be deformed to a holomorphic disk bounded by the Lagrangians $\Gamma_1$, $\Gamma_2$, $\Gamma_3$ if and only if $f_1+f_2+f_3=0$ (see e.g. Fukaya-Oh \cite{FO97}).

Now, the moment map image of a disk component allowed by condition 3(i) lies in either the $(1,0)$ or $(0,1)$ directions and that of condition 3(ii) lies in the $(0,1)$ direction. Hence, to get it balanced, we must have a disk whose moment map image lies in the direction $(-1,-1)$. This is why we set condition 3(iii). Moreover, a holomorphic disk $\varphi:(D^2,\partial D^2,-1,1)\rightarrow(X,L\cup L_x,p,q)$ should then have either one component or three components. This explains why we have condition 4 in Definition \ref{defn3.1}.

This concludes our heuristic reasoning and justification for Definition \ref{defn3.1} and Conjecture \ref{conjecture}. As we mention before, to make this informal argument into a real proof, we will (at least) need some sort of gluing theorems, which are not available at the moment.

\section{Comments: Other del Pezzo surfaces}

We expect that we can play the same game and extend Theorem \ref{main} to other toric del Pezzo surfaces, and even higher dimensional toric Fano manifolds. In particular, by essentially the same constructions and arguments, we can deal with the blowups of $\bP^2$ at one and two points. We leave this as an exercise to the reader.

However, for the blowup of $\bP^2$ at three points, things become more subtle. Indeed, we cannot get the correct matrix factorization by directly applying the constructions in this paper. We do not know why this is so, but it is possibly related to the smoothability of the deformed Lagrangian subspace $L$.

To deal with this case and hence obtain a unified treatment for all toric del Pezzo surfaces, we would need to choose a better deformed Lagrangian subspace $L$ and try to classify the holomorphic disks. We shall leave this and a proof of Conjecture \ref{conjecture} to future research.

\end{document}